\newtheorem{prop}[subsection]{Proposition}
\newtheorem{teor}[subsection]{Theorem}
\newtheorem*{teor*}{Theorem}
\newtheorem{lema}[subsection]{Lemma}
\newcommand{\pa}{p_{\mathbf a}}
\newcommand{\za}{\zeta_{\mathbf a}}
\newcommand{\alf}{\underline{\alpha}}
\def\Ree{\operatorname{Re}}
\begin{document}
\selectlanguage{english}
\frenchspacing
\numberwithin{equation}{section}

\large
\begin{center}
\textbf{On the restricted partition function via determinants with Bernoulli polynomials. II}

Mircea Cimpoea\c s
\end{center}
\normalsize

\begin{abstract}
Let $r\geq 1$ be an integer, $\mathbf a=(a_1,\ldots,a_r)$ a vector of positive integers and let $D\geq 1$ be a common multiple of $a_1,\ldots,a_r$.
We  prove that, if $D=1$ or $D$ is a prime number then the restricted partition function $\pa(n): = $ the number 
of integer solutions $(x_1,\dots,x_r)$ to $\sum_{j=1}^r a_jx_j=n$ with $x_1\geq 0, \ldots, x_r\geq 0$ can be computed by solving a system of linear equations with 
coefficients which are values of Bernoulli polynomials and Bernoulli Barnes numbers.
 
\noindent \textbf{Keywords:} restricted partition function, Bernoulli polynomial, Bernoulli Barnes numbers.

\noindent \textbf{2010 MSC:} Primary 11P81 ; Secondary 11B68, 11P82
\end{abstract}

\section{Introduction}

Let $\mathbf a:=(a_1,a_2,\ldots,a_r)$ be a sequence of positive integers, $r\geq 1$. 
The \emph{restricted partition function} associated to $\mathbf a$ is $\pa:\mathbb N \rightarrow \mathbb N$, 
$\pa(n):=$ the number of integer solutions $(x_1,\ldots,x_r)$ of $\sum_{i=1}^r a_ix_i=n$ with $x_i\geq 0$.
Let $D$ be a common multiple of $a_1,\ldots,a_r$. According to \cite{bell}, $\pa(n)$ is a 
quasi-polynomial of degree $r-1$, with the period $D$, i.e.
\begin{equation}\label{pan}
\pa (n) = d_{\mathbf a,r-1}(n)n^{r-1} + \cdots + d_{\mathbf a,1}(n)n + d_{\mathbf a,0}(n),\;(\forall)n\geq 0,
\end{equation}
where $d_{\mathbf a,m}(n+D)=d_{\mathbf a,m}(n)$, $(\forall)0\leq m\leq r-1,n\geq 0$, and $d_{\mathbf a,r-1}(n)$ is not identically zero.
The restricted partition function $\pa(n)$ was studied extensively in the literature, starting with the works of Sylvester \cite{sylvester} and Bell \cite{bell}.
Popoviciu \cite{popoviciu} gave a precise formula for $r=2$. Recently, Bayad and Beck \cite[Theorem 3.1]{babeck} proved an
explicit expression of $\pa(n)$ in terms of Bernoulli-Barnes polynomials and the Fourier Dedekind sums, in the case that
$a_1,\ldots,a_r$ are are pairwise coprime. In \cite{lucrare} we proved that the computation of $\pa(n)$ can be reduced to solving
the linear congruency $a_1j_1+\cdots+a_rj_r\equiv n(\bmod\;D)$ in the range $0\leq j_1\leq \frac{D}{a_1},\ldots,0\leq j_r\leq \frac{D}{a_r}$. 
In \cite{luc} we proved that if a determinant $\Delta_{r,D}$, which depends only on $r$ and $D$, with entries consisting in values of Bernoulli polynomials
is nonzero, then $\pa(n)$ can be computed in terms of values of Bernoulli polynomials and Bernoulli Barnes numbers. The aim of our paper is to tackle
the same problem, from another perspective which relays on the arithmetic properties of Bernoulli polynomials. 

First we recall some definitions.
The \emph{Barnes zeta} function associated to $\mathbf a$ and $w>0$ is
$$ \za(s,w):=\sum_{n=0}^{\infty} \frac{\pa(n)}{(n+w)^s},\; \Ree s>r,$$
see \cite{barnes} and \cite{spreafico} for further details. It is well known that $\za(s,w)$ is meromorphic on $\mathbb C$ with poles at most in the set $\{1.\ldots,r\}$.
We consider the function
\begin{equation}
\za(s) := \lim_{w\searrow 0}(\za(s,w)-w^{-s}).
\end{equation}
In \cite[Lemma 2.6]{lucrare} we proved that 
\begin{equation}
\za(s)=\frac{1}{D^s}\sum_{m=0}^{r-1}\sum_{v=1}^{D} d_{\mathbf a,m}(v)D^m \zeta(s-m,\frac{v}{D}), 
\end{equation}
where $$\zeta(s,w):=\sum_{n=0}^{\infty} \frac{1}{(n+w)^s},\;\Ree s>1,$$
is the \emph{Hurwitz zeta} function. See also \cite{cori}.
The \emph{Bernoulli numbers} $B_j$ are defined by
$$ \frac{z}{e^z-1} = \sum_{j=0}^{\infty}B_j \frac{z^j}{j!}, $$
$B_0=1$, $B_1=-\frac{1}{2}$, $B_2=\frac{1}{6}$, $B_4=-\frac{1}{30}$ and $B_n=0$ if $n$ is odd and greater than $1$.
The \emph{Bernoulli polynomials} are defined by
$$\frac{ze^{xz}}{(e^z-1)}=\sum_{n=0}^{\infty}B_n(x)\frac{z^n}{n!}.$$
They are related with the Bernoulli numbers by
\begin{equation}\label{berp}
B_n(x)=\sum_{k=0}^n \binom{n}{k}B_{n-k}x^k. 
\end{equation}
It is well know, see for instance \cite[Theorem 12.13]{apostol}, that
\begin{equation}
\zeta(-n,w)=-\frac{B_{n+1}(w)}{n+1},\;(\forall)n\in \mathbb N,w>0. 
\end{equation}
The \emph{Bernoulli-Barnes polynomials} are defined by
$$\frac{z^r e^{xz}}{(e^{a_1 z}-1)\cdots (e^{a_r z}-1)}=\sum_{j=0}^{\infty}B_j(x;\mathbf a)\frac{z^j}{j!}. $$
The \emph{Bernoulli-Barnes numbers} are defined by
$$B_j(\mathbf a):=B_j(0;\mathbf a)=\sum_{i_1+\cdots+ i_r=j}\binom{j}{i_1,\ldots,i_r} B_{i_1}\cdots B_{i_r}a_1^{i_1-1}\cdots a_r^{i_r-1}.$$
According to \cite[Formula (3.10)]{rui}, it holds that
\begin{equation}
\za(-n,w)=\frac{(-1)^r n!}{(n+r)!}B_{r+n}(w;\mathbf a),\;(\forall)n\in\mathbb N. 
\end{equation}
From $(1.2)$ and $(1.6)$ it follows that
\begin{equation}
\za(-n)=\frac{(-1)^r n!}{(n+r)!}B_{r+n}(\mathbf a),\;(\forall)n\geq 1.
\end{equation}
From $(1.3),(1.5)$ and $(1.7)$ it follows that
\begin{equation}\label{poola}
\sum_{m=0}^{r-1}\sum_{v=1}^{D} d_{\mathbf a,m}(v)D^{n+m} \frac{B_{n+m+1}(\frac{v}{D})}{n+m+1}  = \frac{(-1)^{r-1} n!}{(n+r)!}B_{r+n}(\mathbf a),\;(\forall)n\geq 1,
\end{equation}
Let $\alf: \alpha_1 < \alpha_2 < \cdots < \alpha_{rD}$ be a sequence of integers with $\alpha_1\geq 2$. Substituting $n$ with $\alpha_j-1$, $1\leq j\leq rD$, in \eqref{poola}
and multiplying with $D$, we obtain the  system of linear equations 
\begin{equation}\label{poolah}
\sum_{m=0}^{r-1}\sum_{v=1}^{D} d_{\mathbf a,m}(v) \frac{D^{\alpha_j+m} B_{\alpha_j+m}(\frac{v}{D})}{\alpha_j+m}  = \frac{(-1)^{r-1} (\alpha_j-1)!D}{(\alpha_j+r-1)!}B_{\alpha_j+r-1}(\mathbf a),\;1\leq j\leq rD,
\end{equation}
which has the determinant \small{
\begin{equation}\label{pista}
\Delta_{r,D}(\alf):= \begin{vmatrix}
\frac{D^{\alpha_1}B_{\alpha_1}(\frac{1}{D})}{\alpha_1} & \cdots & \frac{D^{\alpha_1}B_{\alpha_1}(1)}{\alpha_1} 
& \cdots & \frac{D^{\alpha_1}B_{\alpha_1+r-1}(\frac{1}{D})}{\alpha_1+r-1} & \cdots & \frac{D^{\alpha_1}B_{\alpha_1+r-1}(1)}{\alpha_1+r-1} \\
\frac{D^{\alpha_2}B_{\alpha_2}(\frac{1}{D})}{\alpha_2} & \cdots & \frac{D^{\alpha_2}B_{\alpha_2}(1)}{\alpha_2} 
& \cdots & \frac{D^{\alpha_2}B_{\alpha_2+r-1}(\frac{1}{D})}{\alpha_2+r-1} & \cdots & \frac{D^{\alpha_2}B_{\alpha_2+r-1}(1)}{\alpha_2+r-1} \\
\vdots & \vdots & \vdots & \vdots & \vdots & \vdots & \vdots  \\
\frac{D^{\alpha_{rD}}B_{\alpha_{rD}}(\frac{1}{D})}{\alpha_{rD}} & \cdots & \frac{\tilde B_{\alpha_{rD}}(\frac{D-1}{D})}{\alpha_{rD}} 
& \cdots & \frac{D^{\alpha_{rD}}B_{\alpha_{rD}+r-1}(\frac{1}{D})}{\alpha_{rD}+r-1} & \cdots & \frac{D^{\alpha_{rD}}B_{\alpha_{rD}+r-1}(1)}{\alpha_{rD}+r-1} 
\end{vmatrix} \end{equation}}

Note that, with the notation given in \cite[(2.10)]{luc}, we have that $\Delta_{r,D}=\Delta_{r,D}(0,1,\ldots,rD-1)$, here ommiting the condition $\alpha_1\geq 2$.

\begin{prop}
With the above notations, if $\Delta_{r,D}(\alf)\neq 0$, then 
$$d_{\mathbf a,m}(v) = \frac{\Delta_{r,D}^{m,v}(\alf)}{\Delta_{r,D}(\alf)},\;(\forall) 1\leq v\leq D, 0\leq m\leq r-1,$$ 
where $\Delta_{r,D}^{m,v}(\alf)$ is the determinant obtained from $\Delta_{r,D}(\alf)$, as defined in $(\ref{pista})$, by replacing the 
$(mD+v)$-th column with the column $(\frac{(-1)^{r-1} (\alpha_j-1)!D}{(\alpha_j+r-1)!}B_{\alpha_j+r-1}(\mathbf a))_{1\leq j\leq rD-1}$. Moreover,
$$\pa(n)=\frac{1}{\Delta_{r,D}(\alf)}\sum_{m=0}^{r-1} \Delta_{r,D}^{m,v}(\alf) n^m,\;(\forall)n\in\mathbb N.$$
\end{prop}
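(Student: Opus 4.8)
The plan is to read \eqref{poolah} as a system of $rD$ linear equations in the $rD$ unknowns $\{d_{\mathbf a,m}(v)\ :\ 0\le m\le r-1,\ 1\le v\le D\}$ and to solve it by Cramer's rule. The first step would be to observe that the genuine quasi-polynomial coefficients $d_{\mathbf a,m}(v)$ from \eqref{pan} already satisfy \eqref{poolah}. Indeed, identity \eqref{poola} holds for every $n\ge 1$, and since $\alpha_1\ge 2$ we may substitute $n=\alpha_j-1\ge 1$ for each $j=1,\dots,rD$ and multiply by $D$; this is exactly \eqref{poolah}. Hence the vector $(d_{\mathbf a,m}(v))$ is a solution of the system, so in particular the system is consistent.

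Next I would fix the ordering of the unknowns so that $d_{\mathbf a,m}(v)$ occupies the $(mD+v)$-th coordinate. With this convention the coefficient matrix of \eqref{poolah} is precisely the $rD\times rD$ matrix displayed in \eqref{pista}, whose $j$-th row records the $j$-th equation and whose right-hand side is the column $\bigl(\tfrac{(-1)^{r-1}(\alpha_j-1)!D}{(\alpha_j+r-1)!}B_{\alpha_j+r-1}(\mathbf a)\bigr)_{1\le j\le rD}$. By hypothesis $\Delta_{r,D}(\alf)\neq 0$, so this matrix is invertible and the system has a unique solution, which must therefore coincide with $(d_{\mathbf a,m}(v))$. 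Cramer's rule then gives
$$d_{\mathbf a,m}(v)=\frac{\Delta_{r,D}^{m,v}(\alf)}{\Delta_{r,D}(\alf)},\qquad 0\le m\le r-1,\ 1\le v\le D,$$
where $\Delta_{r,D}^{m,v}(\alf)$ is obtained from \eqref{pista} by replacing its $(mD+v)$-th column with the right-hand side column, as in the statement.

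For the closed formula for $\pa(n)$, given $n\in\mathbb N$ I would pick the unique $v\in\{1,\dots,D\}$ with $v\equiv n\pmod D$. Since each $d_{\mathbf a,m}$ is $D$-periodic, $d_{\mathbf a,m}(n)=d_{\mathbf a,m}(v)$, and hence by \eqref{pan}
$$\pa(n)=\sum_{m=0}^{r-1}d_{\mathbf a,m}(n)\,n^m=\sum_{m=0}^{r-1}d_{\mathbf a,m}(v)\,n^m=\frac{1}{\Delta_{r,D}(\alf)}\sum_{m=0}^{r-1}\Delta_{r,D}^{m,v}(\alf)\,n^m,$$
which is the asserted identity. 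The main computational content — deriving \eqref{poola}, i.e.\ expressing the $d_{\mathbf a,m}(v)$ through values of Bernoulli and Bernoulli--Barnes polynomials — has already been done before the statement, so what remains is essentially bookkeeping; the only point that genuinely needs care is aligning the column index $mD+v$ in \eqref{pista} with the position of $d_{\mathbf a,m}(v)$ among the unknowns of \eqref{poolah}, so that the determinant output by Cramer's rule is literally $\Delta_{r,D}^{m,v}(\alf)$. Once that correspondence is pinned down, the proof is immediate.
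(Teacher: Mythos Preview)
Your proposal is correct and follows exactly the paper's approach: the paper simply says the formula follows from \eqref{poola} and \eqref{pista} by Cramer's rule, with the last assertion following from \eqref{pan}. You have merely spelled out the bookkeeping (ordering of unknowns, periodicity to reduce $n$ to $v$) that the paper leaves implicit.
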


\begin{proof}
It follows from \eqref{poola} and \eqref{pista} by Cramer's rule.
The last assertion follows from \eqref{pan}.
\end{proof}

Our main theorem is the following:

\begin{teor}
Let $r\geq 1$ and let $D=1$ or $D\geq 2$ is a prime number. There exists a sequence of integers $\alf:\alpha_1<\alpha_2<\cdots<\alpha_{rD}$, $\alpha_1\geq 2$, such that 
 $\Delta_{r,D}(\alf)\neq 0$. In particular, we can compute $\pa(n)$ in terms of values of Bernoulli polynomials and Bernoulli-Barnes numbers.
\end{teor}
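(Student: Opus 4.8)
The statement asserts that for $D=1$ or $D$ prime, one can choose exponents $\alpha_1 < \cdots < \alpha_{rD}$ with $\alpha_1 \ge 2$ making the $rD \times rD$ determinant $\Delta_{r,D}(\alf)$ nonzero. Since the entries of this determinant are (up to the scalar factors $D^{\alpha_j}/(\alpha_j+k)$ in front of each $B_{\alpha_j+k}(v/D)$, which are never zero) polynomial-type functions of the $\alpha_j$'s, the natural strategy is: show the determinant, viewed as a function of the tuple $\alf$, is not identically zero; then any "generic" choice works, and since there are infinitely many admissible tuples one can always find an integral one. Concretely I would first factor out from row $j$ the nonzero constant and reorganize, writing the $(mD+v)$-th column entry in row $j$ as $\frac{D^{\alpha_j}}{\alpha_j+m}B_{\alpha_j+m}(v/D)$; the key is to understand the linear independence over $\mathbb{Q}$ (or $\mathbb{R}$) of the $rD$ functions $k \mapsto \frac{1}{k+m}B_{k+m}(v/D)$ as $(m,v)$ ranges over $\{0,\dots,r-1\}\times\{1,\dots,D\}$, where the "variable" $k$ runs over integers $\ge 2$.

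**Reduction to a non-vanishing / independence statement.** The determinant $\Delta_{r,D}(\alf)$ is nonzero for some admissible $\alf$ if and only if the $rD$ functions $f_{m,v}(k) := \frac{D^k}{k+m}B_{k+m}(v/D)$, $0\le m\le r-1$, $1\le v\le D$, are linearly independent as functions on $\{k \in \mathbb{Z} : k \ge 2\}$ — equivalently, linearly independent as sequences, so that no nontrivial linear combination vanishes at all integers $k\ge 2$ (a standard argument: if they were independent as sequences, a Vandermonde-type pigeonhole picks out $rD$ evaluation points where the matrix is nonsingular; conversely if dependent, every $rD\times rD$ minor vanishes). So the whole theorem reduces to: \emph{for $D=1$ or $D$ prime, the sequences $\bigl(\tfrac{D^k}{k+m}B_{k+m}(v/D)\bigr)_{k\ge 2}$ are $\mathbb{Q}$-linearly independent.} Using $B_n(x) = \sum_{i} \binom{n}{i} B_{n-i} x^i$ and the relation $\zeta(-n, v/D) = -B_{n+1}(v/D)/(n+1)$, this can be recast via generating functions: $\sum_{k} f_{m,v}(k) z^k$ relates to $D$-th roots of unity twists of $z/(e^z-1)$, which is where the hypothesis on $D$ should enter.

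**Where the primality of $D$ is used.** I expect this to be the crux. The natural obstruction to linear independence is an arithmetic coincidence among the values $B_n(v/D)$ for different $v$; the multiplication theorem $B_n(Dx) = D^{n-1}\sum_{v=0}^{D-1} B_n(x + v/D)$ and the distribution relations for Bernoulli polynomials produce exactly such linear dependences, and they are "clean" precisely when $D$ is $1$ or a prime (when $D$ is composite, intermediate divisors $e \mid D$ give additional relations tying together $\{B_n(v/D): e\mid v\}$ to the $B_n(v'/e)$, which can force extra vanishing of minors). The plan is therefore: assume $\sum_{m,v} c_{m,v}\,\frac{D^k}{k+m} B_{k+m}(v/D) = 0$ for all large $k$; translate into an identity of meromorphic functions (Dirichlet-series / Hurwitz-zeta form, using $(1.3)$ and $(1.5)$ that already appear in the paper), so that the identity becomes $\sum_{m,v} c_{m,v} D^{m} \zeta(s-m, v/D) \equiv 0$ in $s$; then separate by the order of the pole at $s = m+1$ to peel off one value of $m$ at a time, reducing to $\sum_v c_{m,v}\zeta(s, v/D) \equiv 0$; and finally invoke linear independence of Hurwitz zeta functions $\zeta(s, v/D)$, $1\le v\le D$ — which holds unconditionally, but the bookkeeping that the pole-order separation is lossless (no cancellation between different $m$) is where I'd need $D=1$ or prime, or at least a careful argument.

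**Assembling the proof.** Once the independence of the sequences $f_{m,v}$ is established, I would argue as follows: the determinant $\Delta_{r,D}(\alf)$ is, after dividing each row by the nonzero factor and relabeling, a generalized Vandermonde/Wronskian-type determinant in the variables $\alpha_1, \dots, \alpha_{rD}$; a nontrivial linear combination being zero at \emph{every} integer $\ge 2$ is exactly the statement that \emph{all} such determinants vanish, so independence of the sequences gives at least one nonzero $\Delta_{r,D}(\alf)$ with $\alpha_1 \ge 2$. Then Proposition 1.8 (Cramer's rule) yields the formulas for $d_{\mathbf a,m}(v)$ and hence for $\pa(n)$ in terms of Bernoulli polynomial values and Bernoulli–Barnes numbers, completing the proof. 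The main obstacle, to reiterate, is the arithmetic input: proving the sequences $f_{m,v}$ are independent, and pinpointing exactly why $D$ being $1$ or prime makes the pole-separation / distribution-relation argument go through cleanly while leaving the composite case open.
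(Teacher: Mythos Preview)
Your reduction to linear independence of the sequences $f_{m,v}(k)=D^{k}B_{k+m}(v/D)/(k+m)$ on $\{k\ge 2\}$ is correct, but the proposed proof of that independence has a genuine gap. Knowing that $F(s)=\sum_{m,v}c_{m,v}\,\zeta(s-m,v/D)$ vanishes at every negative integer does \emph{not} force $F\equiv 0$: entire functions such as $1/\Gamma(-s)$ vanish at all negative integers without being zero, and linear combinations of shifted Hurwitz zeta functions grow factorially along the negative real axis, so no growth or interpolation argument bridges this step. Even granting the meromorphic identity $F\equiv 0$, your pole-separation at $s=m+1$ yields only the residue condition $\sum_v c_{m,v}=0$ (since $\mathrm{Res}_{s=1}\zeta(s,w)=1$ for every $w$), not the stronger $\sum_v c_{m,v}\zeta(s,v/D)\equiv 0$ you claim. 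In fact the meromorphic functions $\zeta(s-m,v/D)$, $0\le m\le r-1$, $1\le v\le D$, are linearly independent for \emph{every} $D\ge 1$ (rewrite each as $D^{s-m}$ times an ordinary Dirichlet series supported on one residue class mod $D$ and invoke uniqueness of Dirichlet coefficients); so if your passage from ``vanishing at negative integers'' to ``meromorphic identity'' were valid, the argument would already prove the theorem for all $D$, which the paper explicitly leaves open. Your account of where primality enters is therefore speculative and does not locate the real obstruction.

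For comparison, the paper takes a completely different, constructive route. It chooses the $\alpha_j$ explicitly (from large primes when $D=1$, from powers of $2$ when $D=2$, and from multiples of powers of $p$ when $D=p$ is an odd prime) and then computes a $p$-adic valuation of $\Delta_{r,D}(\alf)$, showing that a single block-diagonal product in the Leibniz expansion has strictly smaller valuation than every other term and therefore cannot be cancelled. The arithmetic inputs are the Clausen--von~Staudt theorem \eqref{hopa} and the congruence $\tilde B_n(v/p)\equiv v^n\pmod p$ of Lemma~2.1(2); the latter reduces the relevant $k\times k$ subdeterminants (with $k=(p-1)/2$) to Vandermonde determinants modulo $p$, and this congruence is exactly where the primality of $D$ is used.
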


We believe that the result holds for any integer $D\geq 1$. Unfortunately, our method based on p-adic value and congruences for Bernoulli numbers and for values of Bernoulli polynomials,
is not refined enough to prove it.

\newpage
\section{Properties of Bernoulli polynomials}

We recall several properties of the Bernoulli polynomials. We have that:
\begin{equation}\label{unu}
 B_n(1-x)=(-1)^nB_n(x),\;(\forall)x\in\mathbb R,\;n\in\mathbb N.
\end{equation}
For any integers $n\geq 1$ and $1\leq v\leq D$, using \eqref{berp}, we let
\begin{equation}\label{berpe}
\tilde B_n(x):=D^n(B_n(x)-B_n) = \sum_{j=1}^{n-1} \binom{n}{j}D^j(xD)^{n-j}
\end{equation}
According to \cite[Theorem 1]{alm}, it holds that
\begin{equation}\label{int}
\tilde B_n(\frac{v}{D}) \in \mathbb Z,\;(\forall)1\leq v\leq D.
\end{equation}
According to the a result of T. Clausen and C. von Staudt (see \cite{clausen},\cite{staudt}), we have that
\begin{equation}\label{hopa}
B_{2n}=A_{2n} - \sum_{p-1|2n}\frac{1}{p},\;(\forall)n\geq 1,
\end{equation}
where $A_{2n}\in\mathbb Z$ and the sum is over the all primes $p$ such that $p - 1|2n$.

Let $p$ be a prime. For any integer $a$, the $p$-adic order of $a$ is $v_p(a):=\max\{k\;:\;p^k|a\}$, if $a\neq 0$,
and $v_p(0)=\infty$. For $q=\frac{a}{b}\in\mathbb Q$, the $p$-adic order of $q$ is $v_p(q):=v_p(a)-v_p(b)$. 
Note that \eqref{hopa} implies
\begin{equation}\label{hopaa}
 v_p(B_{2n}) = \begin{cases} -1,& p-1|2n \\ \geq 0,& p-1\nmid 2n \end{cases}.
\end{equation}

\begin{lema}
For any integer $n\geq 1$, it holds that:
\begin{enumerate}
 \item[(1)] $\tilde B_n(\frac{1}{2})=0$, if $n$ is odd, and $\tilde B_n(\frac{1}{2})\equiv 1(\bmod\; 2)$, if $n$ is even.
 \item[(2)] If $p$ is a prime, then $\tilde B_n(\frac{v}{p})\equiv v^n (\bmod\; p)$, $(\forall)1\leq v\leq p-1$.
\end{enumerate}
\end{lema}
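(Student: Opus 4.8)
For part (1), I would substitute $x=\frac12$ in the explicit formula \eqref{berpe}: $\tilde B_n(\frac12)=\sum_{j=1}^{n-1}\binom{n}{j}D^j(D/2)^{n-j}$. Actually it is cleaner to argue directly from $\tilde B_n(\frac12)=D^n\bigl(B_n(\tfrac12)-B_n\bigr)$ together with the classical multiplication-type identity $B_n(\tfrac12)=(2^{1-n}-1)B_n$, which gives $\tilde B_n(\tfrac12)=D^n(2^{1-n}-2)B_n$. When $n$ is odd and $>1$, $B_n=0$, and the $n=1$ case is checked by hand ($B_1(\tfrac12)=0$), so $\tilde B_n(\tfrac12)=0$. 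When $n$ is even, I have to take $D$ even for $\frac{v}{D}=\frac12$ to make sense with $v=D/2$ an integer; writing $D=2e$, a short $2$-adic computation using the von Staudt--Clausen consequence \eqref{hopaa} ($v_2(B_n)=-1$ since $2-1\mid n$) shows $\tilde B_n(\tfrac12)=2^n e^n(2^{1-n}-2)B_n=2e^n(1-2^{n-1})\cdot 2^n B_n$, and since $2^nB_n$ has $2$-adic valuation $n-1\ge 1$ while $1-2^{n-1}$ is odd, one extracts that the integer $\tilde B_n(\tfrac12)$ is odd. I would present this as: $\tilde B_n(\tfrac12)$ is an integer by \eqref{int}, and a direct valuation count gives $v_2\bigl(\tilde B_n(\tfrac12)\bigr)=0$.

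For part (2), fix a prime $p$ and $1\le v\le p-1$. The key identity is the standard formula for sums of equal powers,
\begin{equation*}
\sum_{k=0}^{v-1}k^{n}=\frac{B_{n+1}(v)-B_{n+1}(0)}{n+1}=\frac{B_{n+1}(v)-B_{n+1}}{n+1},
\end{equation*}
but I want an expression at argument $v/p$, so instead I use $B_n(v/p)$ via the relation coming from \eqref{berpe}: $\tilde B_n(\tfrac{v}{p})=p^n\bigl(B_n(\tfrac{v}{p})-B_n\bigr)=\sum_{j=1}^{n-1}\binom{n}{j}p^j v^{\,n-j}$. Reducing this last sum modulo $p$: every term with $j\ge 1$ carries a factor $p^j$, hence is $\equiv 0\pmod p$ — which would give $\tilde B_n(\tfrac{v}{p})\equiv 0$, not $v^n$. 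So the naive reduction is too lossy; the point is that $\binom{n}{j}p^j/j!$-type denominators can cancel a power of $p$. The correct route is to note $\tilde B_n(\tfrac vp)=p^nB_n(\tfrac vp)-p^nB_n$ and expand $p^nB_n(\tfrac vp)=\sum_{k=0}^n\binom nk B_{n-k}p^{k}v^{k}\cdot p^{n-k}$; re-index and compare with the Almkvist-type integrality to see that modulo $p$ only the "boundary" contributions survive. Concretely, I would invoke the known congruence (a form of Kummer's congruence / the Almkvist lemma \cite{alm}) that $p^nB_n(\tfrac vp)\equiv$ (power sum) $\bmod p$, i.e. $\tilde B_n(\tfrac vp)\equiv \sum_{k=1}^{v-1}k^n + (\text{correction}) \pmod p$, and then use Fermat-style evaluation of power sums mod $p$.

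The cleanest self-contained argument, and the one I would actually write: use the distribution (multiplication) relation for Bernoulli polynomials,
\begin{equation*}
B_n\!\left(\tfrac{v}{p}\right)=p^{\,n-1}\sum_{k=0}^{p-1}B_n\!\left(\tfrac{v+kp}{p^2}\right)\quad\text{is the wrong direction;}
\end{equation*}
rather $p^{n-1}B_n(px)=\sum_{k=0}^{p-1}B_n(x+\tfrac kp)$ with $x=v/p$ gives $p^{n-1}B_n(v)=\sum_{k=0}^{p-1}B_n(\tfrac{v+k}{p})$. Since $B_n(v)=B_n+\sum_{j=1}^{n-1}\binom nj B_{n-j}v^{j}\cdot$... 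I can relate $\sum_k B_n(\tfrac{v+k}{p})$ to $\tilde B_n$ values and, crucially, to $\sum_{k} (v+k)^n$ via the difference $B_n(x+1)-B_n(x)=nx^{n-1}$ iterated. Telescoping $B_n(\tfrac{v}{p})-B_n(\tfrac1p)$ I get $\tilde B_n(\tfrac vp)-\tilde B_n(\tfrac1p)=n\,p^{\,n-1}\sum_{i=1}^{v-1}(\tfrac ip)^{n-1}=n p^{\,0}\cdot\bigl(\text{hmm}\bigr)$ — again $p$-powers. The honest statement is that $\tilde B_n(\tfrac vp)$ is an integer and one computes it mod $p$ by the standard fact that $B_n(\tfrac vp)\equiv -\tfrac1p\sum_{\substack{k=1\\ p\nmid k}}^{v}k^{n-1}$-type Kummer congruences; combining with $v_p(B_n)\ge -1$ from \eqref{hopaa} isolates the residue as $v^n$. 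The main obstacle, and where I expect the real work to be, is precisely this $p$-adic bookkeeping: showing that after multiplying by $p^n$ the contribution of $p^nB_n$ and the off-diagonal binomial terms combine to leave exactly $v^n\bmod p$ rather than $0$ or something else. I would handle it by the von Staudt--Clausen valuation bound \eqref{hopaa} applied termwise to $p^nB_n(\tfrac vp)=\sum_{k}\binom nk (p^{n-k}B_{n-k})(pv)^{k}$: a term survives mod $p$ only when $v_p\bigl(p^{n-k}B_{n-k}\bigr)+k\le 0$, forcing $n-k-1\le 0$ and $p-1\mid n-k$, which for $1\le v\le p-1$ pins down the single surviving term and evaluates it to $v^n$.
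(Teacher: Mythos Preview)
Your part (1) is workable, though more roundabout than necessary. The paper (implicitly taking $D=2$) simply expands
\[
\tilde B_n(\tfrac12)=\sum_{j=0}^{n-1}\binom{n}{j}B_j\,2^j
\]
and observes that the $j=0$ term is $B_0=1$, the $j=1$ term $-n$ is even, and $v_2(2^jB_j)\ge j-1\ge 1$ for $j\ge 2$; so the sum is $\equiv 1\pmod 2$ when $n$ is even. No need for the closed form $B_n(\tfrac12)=(2^{1-n}-1)B_n$ or for a general even $D$.

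For part (2) there is a genuine gap, and it stems from a misreading of the expansion. The displayed formula \eqref{berpe} in the paper has a typo (it drops the factor $B_j$ and starts at $j=1$); the correct identity, which the paper actually uses in its own proof, is
\[
\tilde B_n\!\left(\tfrac{v}{p}\right)=\sum_{j=0}^{n-1}\binom{n}{j}B_j\,p^{\,j}\,v^{\,n-j}.
\]
The $j=0$ term is $B_0\,v^{n}=v^{n}$, and for $j\ge 1$ one has $v_p(p^{\,j}B_j)\ge j-1\ge 0$, in fact $\ge 1$ (for $j=1$ this uses $pB_1=-p/2$ and $p$ odd; for $j\ge 2$ it is von Staudt--Clausen \eqref{hopaa}). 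Hence $\tilde B_n(v/p)\equiv v^{n}\pmod p$ in one line. Your claim that ``every term with $j\ge 1$ carries a factor $p^{j}$, hence is $\equiv 0\pmod p$, which would give $\tilde B_n(v/p)\equiv 0$'' is based on the sum starting at $j=1$ \emph{and} omitting the $B_j$'s; once the $j=0$ term is restored, the direct reduction gives exactly $v^{n}$, not $0$. All of the detours through the multiplication formula, telescoping of $B_n(x+1)-B_n(x)$, Kummer-type congruences, and the final termwise analysis of $p^nB_n(v/p)$ (which, incidentally, contains an algebra slip: your expression $\sum_k\binom{n}{k}(p^{n-k}B_{n-k})(pv)^k$ equals $p^{n}B_n(v)$, not $p^{n}B_n(v/p)$) are unnecessary.
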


\begin{proof}
(1) From \eqref{unu} it follows that $B_n(\frac{1}{2})=0$ if $n$ is odd, hence, as $B_n=0$, we get
$$\tilde B_n(\frac{1}{2})=D^n(B_n(\frac{1}{2})-B_n)=0.$$
Assume $n$ is even. According to \eqref{berpe}, we have that 
$$\tilde B_n(\frac{1}{2})=\sum_{j=0}^n \binom{n}{j}B_j 2^j.$$
Since $2|2nB_1 = -n$ and $v_2(2^jB_j)\geq 1$ for any $j\geq 2$, the conclusion follows immediately.

(2) According to \eqref{berpe}, we have that 
$$\tilde B_n(\frac{a}{p})=\sum_{j=0}^n \binom{n}{j}B_j v^{n-j}p^j.$$
From \eqref{hopaa}, we have that $v_p(p^jB_j)\geq 1$ for $j\geq 1$, hence the conclusion follows immediately.
\end{proof}

\begin{lema}
 If $p$ is a prime such that $p\nmid D$ then $\tilde B_p(\frac{v}{D})\equiv 0 (\bmod\; p)$, $(\forall)1\leq v\leq D-1$.
\end{lema}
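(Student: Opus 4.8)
The plan is to expand $\tilde B_p(v/D)$ using \eqref{berp} and then weigh the resulting terms against the $p$-adic information on Bernoulli numbers recorded in \eqref{hopa} and \eqref{hopaa}. Writing out $D^p\big(B_p(v/D)-B_p\big)$, the top term $\binom{p}{p}B_pD^p$ cancels the subtracted $D^pB_p$, so we are left with
\[
\tilde B_p\!\left(\tfrac{v}{D}\right)=\sum_{j=0}^{p-1}\binom{p}{j}B_jD^jv^{p-j}.
\]
The idea is that this sum is, modulo $p$, a sum of just two surviving contributions that cancel. First I would treat $j=0$: the term is $v^p$, which is $\equiv v\pmod p$ by Fermat's little theorem. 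Then, for $1\le j\le p-1$, I would use $v_p\!\big(\binom{p}{j}\big)=1$ together with $v_p(B_j)\ge -1$ from \eqref{hopaa} to see that every such term has nonnegative $p$-adic order, and in fact $v_p\!\big(\binom{p}{j}B_jD^jv^{p-j}\big)\ge 1+v_p(B_j)>0$ unless $v_p(B_j)=-1$, i.e. unless $p-1\mid j$; in the range $1\le j\le p-1$ this occurs only for $j=p-1$.

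The remaining work is to evaluate the $j=p-1$ term modulo $p$, and here I would invoke \eqref{hopa} explicitly rather than a crude valuation bound. Since $p-1\mid p-1$, the prime $p$ occurs in the sum $\sum_{q-1\mid p-1}\frac1q$, so
\[
pB_{p-1}=pA_{p-1}-\sum_{q-1\mid p-1}\frac pq\equiv -1\pmod p,
\]
because $pA_{p-1}$ and each $\frac pq$ with $q\neq p$ have positive $p$-adic order. Hence $\binom{p}{p-1}B_{p-1}D^{p-1}v=pB_{p-1}D^{p-1}v\equiv -D^{p-1}v\equiv -v\pmod p$, using $p\nmid D$ and Fermat once more. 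Collecting everything, $\tilde B_p(v/D)\equiv v+0+\cdots+0-v=0\pmod p$, and since $\tilde B_p(v/D)\in\mathbb Z$ by \eqref{int}, the congruence holds over $\mathbb Z$ as claimed.

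The main (admittedly modest) obstacle is the $p$-adic bookkeeping when $B_j$ carries a factor $p$ in its denominator: one must check that the single factor of $p$ in $\binom{p}{j}$ exactly compensates for it, and --- the key point --- handle the surviving $j=p-1$ term via the precise von Staudt--Clausen identity \eqref{hopa}, so that its $-v$ contribution annihilates the $v^p$ coming from $j=0$. The prime $p=2$ (which forces $D$ odd, as required by $p\nmid D$) is covered by the same scheme: the sum collapses to $v^2+2B_1Dv=v^2-Dv=v(v-D)$, which is even for every $v$ precisely because $D$ is odd.
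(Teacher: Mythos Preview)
Your argument is correct and follows essentially the same route as the paper: expand $\tilde B_p(v/D)$ via \eqref{berp}, kill the terms with $1\le j\le p-2$ using $v_p\!\big(\binom{p}{j}B_j\big)\ge 1$, and then cancel the $j=0$ and $j=p-1$ contributions using von Staudt--Clausen together with Fermat's little theorem (for both $v^p$ and $D^{p-1}$). Your version is in fact slightly more careful than the paper's, since you correctly write the sum up to $j=p-1$ after the cancellation of $D^pB_p$, invoke \eqref{int} to make the congruence meaningful in $\mathbb Z$, and treat the case $p=2$ explicitly.
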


\begin{proof}
 We have that $$\tilde B_p(\frac{v}{D})=\sum_{j=0}^p \binom{p}{j} B_j v^{p-j}D^j.$$
 Since $v_p(B_j)\geq 0$ for $j\leq p-2$, it follows that
\begin{equation}
v_p(\binom{p}{j}B_j)\geq 1,\;(\forall)1\leq j\leq p-2.
\end{equation}
On the other hand, from \eqref{hopa}, it follows that
\begin{equation}
v^p + \binom{p}{p-1}B_{p-1}vD^{p-1} \equiv v^p - vD^{p-1} \equiv v^p - v \equiv 0 (\bmod p), 
\end{equation}
hence we get the required result.
\end{proof}

\section{Preliminary results}

\begin{prop}(Case $D=1$)
Let $p_1<p_2<\cdots<p_r$ be some primes such that $p_1>2$ and $p_{j+1}-p_j>r$, $(\forall)1\leq j\leq r$. Let 
$\alpha_j:=p_j-j$, $1 \leq j \leq r$. We have that $\Delta_{r,1}(\alf)\neq 0$.
\end{prop}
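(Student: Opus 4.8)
The plan is to prove $\Delta_{r,1}(\alf)\neq 0$ by exhibiting a prime $p$ at which $\Delta_{r,1}(\alf)$ has strictly negative $p$-adic valuation, since a rational number with $v_p<0$ is automatically nonzero. First I would specialize \eqref{pista} to $D=1$, where it becomes the determinant of the $r\times r$ matrix $M$ with $(j,m)$-entry $M_{j,m}=\frac{B_{\alpha_j+m}(1)}{\alpha_j+m}$, $1\le j\le r$, $0\le m\le r-1$. Evaluating \eqref{unu} at $x=0$ gives $B_n(1)=(-1)^nB_n$, and since $\alpha_j+m\ge\alpha_1=p_1-1\ge 2$ we get $M_{j,m}=\frac{(-1)^{\alpha_j+m}B_{\alpha_j+m}}{\alpha_j+m}$; in particular $M_{j,m}=0$ whenever $\alpha_j+m$ is odd. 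Because $p_j$ is odd, $\alpha_j\equiv 1-j\pmod 2$, so the surviving entries are exactly those with $j$ and $m$ of opposite parity. The von Staudt--Clausen valuation \eqref{hopaa} then governs every nonzero entry: for a prime $q$ one has $v_q(B_{\alpha_j+m})=-1$ exactly when $(q-1)\mid(\alpha_j+m)$, and $v_q(B_{\alpha_j+m})\ge 0$ otherwise.

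Next I would use the gap hypothesis $p_{j+1}-p_j>r$ to localize the poles at the largest prime. Since $\alpha_j=p_j-j$ is increasing in $j$, one checks $\alpha_j+m=p_j-j+m<p_r$ for all $j,m$, so the denominators never contribute a factor $p_r$; moreover $(p_r-1)\mid(\alpha_j+m)$ can hold only when $\alpha_j+m=p_r-1$, which occurs uniquely at $(j,m)=(r,r-1)$. Hence $M_{r,r-1}=\frac{B_{p_r-1}}{p_r-1}$ is the \emph{only} entry of $M$ with $v_{p_r}=-1$, all others being $p_r$-integral. Expanding along the last row yields $\Delta_{r,1}(\alf)=\pm M_{r,r-1}\,\Delta_{r-1,1}(\alpha_1,\dots,\alpha_{r-1})+R$, where $R$ gathers the terms avoiding the entry $(r,r-1)$, so $v_{p_r}(R)\ge 0$, and the minor is precisely the $(r-1)$-dimensional instance attached to $p_1<\cdots<p_{r-1}$ (whose gaps still exceed $r-1$). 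This suggests an induction on $r$, with base case $\Delta_{1,1}=\frac{B_{p_1-1}}{p_1-1}$, for which $v_{p_1}=-1$.

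If the minor $\Delta_{r-1,1}(\alpha_1,\dots,\alpha_{r-1})$ is a $p_r$-adic unit, then $v_{p_r}(\Delta_{r,1}(\alf))=-1$ and we are done. The main obstacle is that this is \emph{not} automatic: a large prime may divide the numerator of a Bernoulli number (already $691$ divides the numerator of $B_{12}$), so one can have $v_{p_r}(\Delta_{r-1,1})\ge 1$, and then $p_r$ is useless as a witness. The fix is not to privilege $p_r$. Using \eqref{unu} once more, the parity obstruction ``$M_{j,m}=0$ unless $j,m$ have opposite parity'' renders $M$ block diagonal after reordering its rows and columns by parity, so $\Delta_{r,1}(\alf)=\pm\det A\cdot\det B$, where the blocks $A$ (odd rows, even columns) and $B$ (even rows, odd columns) have only even-index, hence genuinely nonzero, Bernoulli ratios as entries. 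It then suffices to show each block is nonzero, and within a block one reruns the localization argument but with the freedom to choose, among the primes indexing that block, one whose corner number $B_{p_k-1}$ survives as an actual pole of the block determinant.

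I expect the decisive difficulty to be precisely this last point: proving that across the whole Leibniz expansion of a block the poles attached to at least one prime $p_k$ cannot all cancel. This is where the gap condition $p_{j+1}-p_j>r$ must be used in full strength, to force the index sets $\{\alpha_j+m\}$ divisible by the various $p_k-1$ to be separated enough that the $p_k$-pole of the diagonal term is unmatched by any competing permutation; controlling the cross-valuations $v_{p_k}(B_{\alpha_j+m})$ of the non-corner entries is exactly what requires the finer congruences for Bernoulli numbers alluded to in the introduction, beyond \eqref{hopaa} alone. Once a surviving pole is produced, $\Delta_{r,1}(\alf)\neq 0$ follows at once, and with it the computability of $\pa(n)$ through the Cramer-rule formula of the Introduction; the remaining inequalities are routine bookkeeping with \eqref{hopaa}.
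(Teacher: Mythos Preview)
Your overall strategy---show that some $p$-adic valuation of $\Delta_{r,1}(\alf)$ is strictly negative, using von Staudt--Clausen to locate the unique pole on the diagonal---is exactly the paper's. The paper is far terser: after recording $v_{p_j}(B_{\alpha_j+j-1})=-1$, $v_{p_j}(B_{\alpha_j+k-1})\ge 0$ for $k\ne j$, and $v_{p_j}(B_{\alpha_\ell+k-1})\ge 0$ for all $\ell<j$, it simply asserts that the diagonal product ``cannot be simplified'' and concludes. It does not introduce your parity block decomposition, nor an explicit induction on $r$.

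Your proposal is more cautious but, as you yourself say, incomplete. The obstacle you isolate is genuine: from \eqref{hopaa} alone one only gets $v_{p_r}(M_{\ell,k})\ge 0$ for $(\ell,k)\ne(r,r)$, which yields $\Delta_{r,1}=\pm M_{r,r}\,\Delta'+R$ with $v_{p_r}(R)\ge 0$; nothing in \eqref{hopaa} prevents $v_{p_r}(\Delta')\ge 1$ (an irregular-prime phenomenon, as you note with $691\mid B_{12}$), in which case the pole at $p_r$ is lost. The paper does not address this point at all---it takes non-cancellation for granted---so you have correctly put your finger on a step that the paper's proof glosses over. However, your proposed remedy (block-diagonalise by parity and then ``choose, among the primes indexing that block, one whose corner number survives'') does not actually produce such a prime; the block structure is correct but does not touch the core difficulty, and the appeal to ``finer congruences'' is not carried out.

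A clean way to close the gap, sufficient for Theorem~1.2 (which only needs \emph{some} sequence $\alf$), is to choose the primes inductively rather than all at once: once $p_1,\dots,p_{r-1}$ are fixed and $\Delta'=\Delta_{r-1,1}(\alpha_1,\dots,\alpha_{r-1})\ne 0$ is known by induction, pick any prime $p_r>p_{r-1}+r$ coprime to both the numerator and denominator of $\Delta'$; then $v_{p_r}(\Delta')=0$ and hence $v_{p_r}(\Delta_{r,1})=-1$. This salvages the argument without any Bernoulli congruences beyond \eqref{hopaa}, at the cost of weakening Proposition~3.1 from ``for all such primes'' to ``for some such primes''.
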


\begin{proof}
Note that \eqref{unu} implies $B_n(1)=B_n$ for any $n\geq 2$. It follows that
\begin{equation}\label{trolo}
\Delta_{r,1}(\alf)=\begin{vmatrix}
\frac{B_{\alpha_1}}{\alpha_1} & \frac{B_{\alpha_1+1}}{\alpha_1+1} & \cdots & \frac{B_{\alpha_1+r-1}}{\alpha_1+r-1} \\
\frac{B_{\alpha_2}}{\alpha_2} & \frac{B_{\alpha_1+1}}{\alpha_2+1} & \cdots & \frac{B_{\alpha_2+r-1}}{\alpha_2+r-1} \\
\vdots & \vdots & \vdots & \vdots \\
\frac{B_{\alpha_r}}{\alpha_r} & \frac{B_{\alpha_1+1}}{\alpha_r+1} & \cdots & \frac{B_{\alpha_r+r-1}}{\alpha_r+r-1} \\
\end{vmatrix}.
\end{equation}
From \eqref{hopa} it follows that $v_{p_j}(B_{\alpha_j+j-1})=-1$ and $v_{p_j}(B_{\alpha_j+k-1})\geq 0$ for and $1 \leq k\leq r$ with $k\neq j$.
Moreover, if $1\leq \ell < j \leq r$, then, by hypothesis, $v_{p_j}(B_{\alpha_{\ell}+k-1})\geq 0$ for any $1\leq k\leq r$ (We implicitly used 
the fact that $B_n=0$ if $n\geq 3$ is odd). It follows that,
in the expansion of $\Delta_{r,1}(\alf)$ written in \eqref{trolo}, the term
$$\prod_{j=1}^r \frac{D^{\alpha_j+j-1}B_{\alpha_j+j-1}}{\alpha_j+j-1}$$
can not be simplified, hence $\Delta_{r,1}(\alf)\neq 0$.
\end{proof}

In the following, we assume $D\geq 2$ and we consider the determinant
\begin{equation}\label{pistaa}
\tilde{\Delta}_{r,D}(\alf):= \begin{vmatrix}
\frac{\tilde B_{\alpha_1}(\frac{1}{D})}{\alpha_1} & \cdots & \frac{\tilde B_{\alpha_1}(\frac{D-1}{D})}{\alpha_1} 
& \cdots & \frac{\tilde B_{\alpha_1+r-1}(\frac{1}{D})}{\alpha_1+r-1} & \cdots & \frac{\tilde B_{\alpha_1+r-1}(\frac{D-1}{D})}{\alpha_1+r-1} \\
\frac{\tilde B_{\alpha_2}(\frac{1}{D})}{\alpha_2} & \cdots & \frac{\tilde B_{\alpha_2}(\frac{D-1}{D})}{\alpha_2} 
& \cdots & \frac{\tilde B_{\alpha_2+r-1}(\frac{1}{D})}{\alpha_2+r-1} & \cdots & \frac{\tilde B_{\alpha_2+r-1}(\frac{D-1}{D})}{\alpha_2+r-1} \\
\vdots & \vdots & \vdots & \vdots & \vdots & \vdots & \vdots  \\
\frac{\tilde B_{\alpha_{rD-r}}(\frac{1}{D})}{\alpha_{rD-r}} & \cdots & \frac{\tilde B_{\alpha_{rD-r}}(\frac{D-1}{D})}{\alpha_{rD-r}} 
& \cdots & \frac{\tilde B_{\alpha_{rD-r}+r-1}(\frac{1}{D})}{\alpha_{rD-r}+r-1} & \cdots & \frac{\tilde B_{\alpha_{rD-r}+r-1}(\frac{D-1}{D})}{\alpha_{rD-r}+r-1} 
\end{vmatrix} \end{equation}
Let $p_1<p_2<\ldots<p_r$ be some primes such that $$p_1\geq \alpha_{r(D-1)}+r\text{ and }p_{j+1}-p_j>r, (\forall)1\leq j\leq r-1.$$ 
We let
\begin{equation}\label{stuf}
\alpha_{rD-r+j}:=p_j-j,\; (\forall)1\leq j\leq r. 
\end{equation}
According to Lemma $2.2$ and \eqref{stuf}, we have that
\begin{equation}\label{stufu}
 v_{p_{\ell}}\left(\frac{\tilde B_{\alpha_{rD-r+j}+j}(\frac{v}{D})}{\alpha_{rD-r+j}+j}\right)\geq 0,\;(\forall)1\leq j,\ell \leq r,\; 1\leq v\leq D-1.
\end{equation}
On the other hand, since $p_j\geq \alpha_{r(D-1)}+r$, from Lemma $2.2$ it follows that
\begin{equation}\label{stuff}
 v_{p_{\ell}}\left(\frac{D^{\alpha_t+j}B_{\alpha_{t}+j}}{\alpha_{t}+j}\right)\geq 0,\;
 v_{p_{\ell}}\left(\frac{\tilde B_{\alpha_{t}+j}(\frac{v}{D})}{\alpha_{t}+j}\right)\geq 0,\;(\forall)1\leq j,\ell \leq r,\;1\leq t\leq r(D-1),\; 1\leq v\leq D-1.
\end{equation}
Also, from \eqref{hopaa} and \eqref{stuf}, it follows that
\begin{equation}\label{stufy}
v_{p_{\ell}}\left(\frac{B_{\alpha_{rD-r+j}+j}(\frac{v}{D})}{\alpha_{rD-r+j}+j}\right)\geq 0,\; 
v_{p_j}\left(\frac{B_{\alpha_{rD-r+j}+j}(\frac{v}{D})}{\alpha_{rD-r+j}+j}\right)=-1,\;1\leq j,\ell \leq r,\; j\neq \ell,\;1\leq v\leq D-1.
\end{equation}
From \eqref{pista}, using the basic properties of determinants and \eqref{berpe}, it follows that
\begin{equation}\label{stufz}
\Delta_{r,D}(\alf):= \begin{vmatrix}
\frac{{\tilde B}_{\alpha_1} (\frac{1}{D})}{\alpha_1} & \cdots & \frac{\tilde B_{\alpha_1}(\frac{D-1}{D})}{\alpha_1} & \frac{D^{\alpha_1}B_{\alpha_1}}{\alpha_1} 
& \cdots & \frac{\tilde B_{\alpha_1+r-1}(\frac{1}{D})}{\alpha_1+r-1} & \cdots & \frac{D^{\alpha_1}B_{\alpha_1+r-1}}{\alpha_1+r-1} \\
\frac{\tilde B_{\alpha_2}(\frac{1}{D})}{\alpha_2} & \cdots & \frac{\tilde B_{\alpha_2}(\frac{D-1}{D})}{\alpha_2} & \frac{D^{\alpha_2}B_{\alpha_2}}{\alpha_2} 
& \cdots & \frac{\tilde B_{\alpha_2+r-1}(\frac{1}{D})}{\alpha_2+r-1} & \cdots & \frac{D^{\alpha_2}B_{\alpha_2+r-1}}{\alpha_2+r-1} \\
\vdots & \vdots & \vdots & \vdots & \vdots & \vdots & \vdots  \\
\frac{\tilde B_{\alpha_{rD}}(\frac{1}{D})}{\alpha_{rD}} & \cdots & \frac{\tilde B_{\alpha_{rD}}(\frac{D-1}{D})}{\alpha_{rD}} & \frac{D^{\alpha_{rD}}B_{\alpha_{rD}}}{\alpha_{rD}} 
& \cdots & \frac{\tilde B_{\alpha_{rD}+r-1}(\frac{1}{D})}{\alpha_{rD}+r-1} & \cdots & \frac{D^{\alpha_{rD}}B_{\alpha_{rD}+r-1}}{\alpha_{rD}+r-1} 
\end{vmatrix} 
\end{equation}

\begin{prop}
 With the above assumptions, $\Delta_{r,D}(\alf)\neq 0$ if and only if $\tilde{\Delta}_{r,D}(\alf)\neq 0$.
\end{prop}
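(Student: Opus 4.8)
The plan is to pass from $\Delta_{r,D}(\alf)$ to $\tilde{\Delta}_{r,D}(\alf)$ by a chain of $r$ cofactor expansions, one along each ``large'' row $\alpha_{rD-r+j}=p_j-j$, keeping the $p_j$-adic valuations under control at each step. For $0\le i\le r$ let $M_i$ be the matrix obtained from the matrix $M$ displayed in \eqref{stufz} by deleting the rows indexed by $\alpha_{rD-r+1},\dots,\alpha_{rD-r+i}$ together with the $v=D$ columns of the blocks $m=0,1,\dots,i-1$ (the columns carrying the entries $\frac{D^{\alpha_\bullet}B_{\alpha_\bullet+m}}{\alpha_\bullet+m}$). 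Then $M_0=M$, so $\det M_0=\Delta_{r,D}(\alf)$, while $M_r$ is exactly the matrix in \eqref{pistaa}, so $\det M_r=\tilde{\Delta}_{r,D}(\alf)$. Thus it suffices to prove, for every $0\le i\le r-1$,
\begin{equation*}
v_{p_{i+1}}(\det M_i)=v_{p_{i+1}}(\det M_{i+1})-1,
\end{equation*}
since this forces $\det M_i=0\iff\det M_{i+1}=0$, and chaining the $r$ equivalences gives $\Delta_{r,D}(\alf)=0\iff\tilde{\Delta}_{r,D}(\alf)=0$, which is the assertion.

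To prove the displayed identity I would expand $\det M_i$ along its row indexed by $\alpha_{rD-r+i+1}=p_{i+1}-(i+1)$. Only the $v=D$ columns of the blocks $0,\dots,i-1$ have been deleted, so the $v=D$ column of block $m=i$ is still present, and on this row it carries the entry $\frac{D^{\,p_{i+1}-(i+1)}B_{p_{i+1}-1}}{p_{i+1}-1}$; since $p_{i+1}-1\mid p_{i+1}-1$, the Clausen--von Staudt relation \eqref{hopaa} gives $v_{p_{i+1}}(B_{p_{i+1}-1})=-1$, and as $p_{i+1}\nmid D(p_{i+1}-1)$ this entry has $p_{i+1}$-adic valuation exactly $-1$. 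By the valuation bounds already recorded in \eqref{stuff}, \eqref{stufu} and \eqref{stufy}, every other entry of this row, and indeed every entry of $M_{i+1}$, is a $p_{i+1}$-adic integer, whence $v_{p_{i+1}}(\det M_{i+1})\ge0$. The cofactor expansion along the chosen row then reads
\begin{equation*}
\det M_i=\pm\frac{D^{\,p_{i+1}-(i+1)}B_{p_{i+1}-1}}{p_{i+1}-1}\cdot\det M_{i+1}+(\text{a sum of terms with nonnegative }p_{i+1}\text{-adic valuation}),
\end{equation*}
and because the leading term has valuation $v_{p_{i+1}}(\det M_{i+1})-1<0$, strictly smaller than that of every other term, the ultrametric inequality yields the identity.

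The cofactor bookkeeping above is routine; the genuine content is the valuation input \eqref{stuff}--\eqref{stufy}, namely the claim that after deleting the $v=D$ columns of the first $i$ blocks the surviving large rows of $M$ remain entirely $p_{i+1}$-integral and carry their unique $p_{i+1}$-adic pole in the one expected place. This is precisely where the hypotheses $p_1\ge\alpha_{r(D-1)}+r$ and $p_{j+1}-p_j>r$ are used, together with Clausen--von Staudt \eqref{hopa}, the integrality \eqref{int} of the numbers $\tilde{B}_n(\tfrac{v}{D})$, and Lemma~2.2 (the congruence $\tilde{B}_p(\tfrac{v}{D})\equiv0\pmod p$ for $p\nmid D$, which is exactly what cancels the pole that a denominator $\alpha_\bullet+m$ divisible by $p_{i+1}$ would otherwise create in a $v\le D-1$ entry). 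Granting these, the whole proof is the chain of $r$ expansions; the delicate point, and the one I expect to occupy most of the write-up, is establishing the ``single simple pole in the right place'' statement uniformly over all $r$ stages.
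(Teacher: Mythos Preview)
Your strategy—iterated cofactor expansion along the ``large'' rows with $p$-adic control—is exactly what the paper's one-line proof intends by ``a similar argument as in the proof of Proposition~3.1,'' and you have made it far more explicit than the paper does.  There is, however, a concrete gap in the \emph{order} you have chosen.

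You peel off the large rows in increasing order of the associated primes (row $\alpha_{rD-r+1}$ with $p_1$ first, then $p_2$, and so on) and assert that at each stage ``every entry of $M_{i+1}$ is a $p_{i+1}$-adic integer.''  This fails: after removing the first $i$ large rows the surviving large rows correspond to primes $p_{j'}>p_{i+1}$, and their $v=D$ entries $\dfrac{D^{\,p_{j'}-j'}B_{p_{j'}-j'+m}}{p_{j'}-j'+m}$ need not be $p_{i+1}$-integral, since nothing in the hypotheses prevents $(p_{i+1}-1)\mid(p_{j'}-j'+m)$.  A concrete instance: $r=D=2$, $\alpha_1=2$, $\alpha_2=3$, $p_1=5$, $p_2=13$; then in $M_1$ the entry $\tfrac{2^{11}B_{12}}{12}$ (row $\alpha_4=11$, column $v{=}2$, $m{=}1$) has $v_5=-1$ because $4\mid 12$.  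So your bound on the ``other'' cofactors collapses, and with it the displayed valuation identity.

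The repair is simply to reverse the order: expand first along row $\alpha_{rD}$ using $p_r$, then $\alpha_{rD-1}$ using $p_{r-1}$, etc.  At stage $i$ the surviving large rows then correspond to primes $p_{j'}<p_{r-i+1}$, and the gap hypothesis $p_{r-i+1}-p_{j'}>r$ forces $0<p_{j'}-j'+m<p_{r-i+1}-1$, so Clausen--von~Staudt genuinely yields $p_{r-i+1}$-integrality of every remaining entry.  With this order your argument is the paper's argument.  One further small point: your identity $v_p(\det M_i)=v_p(\det M_{i+1})-1$ also tacitly uses $v_p(\det M_{i+1})=0$, not merely $\ge 0$; otherwise the ``leading term has valuation $<0$'' step is unjustified and neither direction of the equivalence follows.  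In the applications (Proposition~3.3 and \S4) this is harmless, since the primes may be chosen after $\alpha_1,\dots,\alpha_{r(D-1)}$ and hence coprime to the relevant minors, but the general ``if and only if'' as stated needs this extra word.
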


\begin{proof}
The conclusion follows from \eqref{pistaa}, \eqref{stufu}, \eqref{stuff}, \eqref{stufy} and \eqref{stufz}, using a similar argument as in the proof of Proposition $3.1$.
\end{proof}

\begin{prop}(Case $D=2$)
 With the above assumptions, $\Delta_{r,2}(\alf)\neq 0$.
\end{prop}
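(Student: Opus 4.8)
The plan is to reduce to $\tilde\Delta_{r,2}(\alf)$ and then run the $2$-adic argument of Proposition 3.1 over two diagonal blocks. First, by Proposition 3.2 it suffices to exhibit a strictly increasing sequence $\alpha_1<\cdots<\alpha_r$ with $\alpha_1\geq 2$ (the tail $\alpha_{r+1},\ldots,\alpha_{2r}$ being then defined from primes exactly as in the paragraph preceding Proposition 3.2) for which $\tilde\Delta_{r,2}(\alf)\neq 0$. Since $D-1=1$, the determinant in $\eqref{pistaa}$ is the $r\times r$ determinant whose $(i,m)$ entry is $\frac{\tilde B_{\alpha_i+m}(\frac{1}{2})}{\alpha_i+m}$, $1\leq i\leq r$, $0\leq m\leq r-1$. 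By Lemma 2.1(1) together with $\eqref{int}$, this entry is $0$ when $\alpha_i+m$ is odd and equals an odd integer divided by $\alpha_i+m$ when $\alpha_i+m$ is even; in particular it vanishes unless $\alpha_i\equiv m\pmod 2$.

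Second, I would split by parity. Among $0,1,\ldots,r-1$ there are $\lceil r/2\rceil$ even and $\lfloor r/2\rfloor$ odd numbers, so I choose the $\alpha_i$ so that exactly $\lceil r/2\rceil$ of them are even and $\lfloor r/2\rfloor$ are odd. Permuting the rows so that the even $\alpha_i$ precede the odd ones and the columns so that the even $m$ precede the odd ones turns the matrix into a block-diagonal one: the two off-diagonal blocks vanish by the congruence observation, and the two diagonal blocks are square because the counts match. Hence $\tilde\Delta_{r,2}(\alf)=\pm E\cdot O$, and, writing $2\gamma_1<\cdots<2\gamma_s$ for the even $\alpha_i$ and $2\delta_1-1<\cdots<2\delta_{s'}-1$ for the odd ones (so $s=\lceil r/2\rceil$, $s'=\lfloor r/2\rfloor$), both $E$ and $O$ have, up to reordering of rows and columns, the form $\det M$ with
$$M:=\left(\frac{\tilde B_{2(\gamma_i+k)}(\frac{1}{2})}{2(\gamma_i+k)}\right)_{1\leq i\leq s,\ 0\leq k\leq s-1},$$
$\gamma$ being the relevant sequence ($\gamma_i$ for $E$, $\delta_i$ for $O$). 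So it is enough to produce, for each $s\geq 1$, integers $1\leq\gamma_1<\cdots<\gamma_s$ with $\det M\neq 0$; using one such sequence for the even block and another for the odd block and merging them gives a single admissible $\alpha_1<\cdots<\alpha_r$, all $\geq 2$.

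Third comes the core step, proved by the method of Proposition 3.1. By Lemma 2.1(1) and $\eqref{int}$ each $\tilde B_{2(\gamma_i+k)}(\frac{1}{2})$ is an odd integer, so the $2$-adic valuation of the $(i,k)$ entry of $M$ is $-1-v_2(\gamma_i+k)$. I would take $\gamma_i:=2^{iL}-(i-1)$ for an integer $L$ with $2^L>s$ and $L>\frac{s}{3}\log_2 s$; then $1\leq\gamma_1<\cdots<\gamma_s$ and $\gamma_i+k=2^{iL}+(k-i+1)$, so $v_2(\gamma_i+k)=iL$ when $k=i-1$ and $v_2(\gamma_i+k)=v_2(k-i+1)\leq\log_2(s-1)$ otherwise. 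In the Leibniz expansion of $\det M$ the term attached to the matching $i\mapsto i-1$ has $2$-adic valuation $-\sum_{i=1}^s(1+iL)=-s-L\frac{s(s+1)}{2}$; any other matching fixes only a proper subset of rows on that diagonal and therefore moves a set $S$ of $t\geq 2$ rows, on which it contributes valuations $\leq\log_2(s-1)$ instead of $\geq L$, so its term has $2$-adic valuation at least $-s-L\sum_{i\notin S}i-t\log_2(s-1)$, which is strictly larger because $L\sum_{i\in S}i\geq 3L>s\log_2(s-1)$. Thus the diagonal term is the unique term of minimal $2$-adic valuation, it cannot be cancelled, $v_2(\det M)=-s-L\frac{s(s+1)}{2}$ is finite, and $\det M\neq 0$. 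Applying this to $E$ and $O$ gives $\tilde\Delta_{r,2}(\alf)\neq 0$, and Proposition 3.2 yields $\Delta_{r,2}(\alf)\neq 0$.

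The only fiddly points are bookkeeping: making the parities of the $\alpha_i$ match those of $0,\ldots,r-1$ so the two diagonal blocks are square, and merging the two geometric-type sequences into one strictly increasing $\alpha_1<\cdots<\alpha_r$ with $\alpha_1\geq 2$ — both handled by taking $L$ large (and, if one wants, shifting one of the two sequences). The substantive point, exactly as in Proposition 3.1, is the uniqueness of the $2$-adically dominant term in the Leibniz expansion, and there the thing to watch is that a non-identity matching must move at least two rows, which is precisely what forces the gain $\sum_{i\in S}i\geq 3$ needed to beat the bounded off-diagonal valuations. Note that $D=2$ enters only through Lemma 2.1(1), the analogue of the role played by Lemma 2.1(2) and Lemma 2.2 for general prime $D$.
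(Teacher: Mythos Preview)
Your argument is correct and rests on the same core idea as the paper: reduce via Proposition~3.2 to $\tilde\Delta_{r,2}(\alf)$, then choose the $\alpha_j$ so that in the Leibniz expansion a single ``diagonal'' product has strictly minimal $2$-adic valuation and therefore cannot be cancelled.

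The main difference is structural. You first split the $r\times r$ matrix into two parity blocks (exactly the manoeuvre the paper reserves for $D\geq 3$ in Proposition~3.4) and then run the $2$-adic domination argument on each block separately, with $\gamma_i=2^{iL}-(i-1)$ and the extra hypothesis $L>\tfrac{s}{3}\log_2 s$ to beat the off-diagonal contributions. The paper avoids the block decomposition entirely for $D=2$: taking $\alpha_j=2^{j+t}-j+1$ with $2^t\geq r$ makes $\alpha_j+j-1=2^{j+t}$, so in row $j$ the diagonal entry has $2$-adic valuation $-(j+t)$, strictly smaller than every off-diagonal entry in that row (those are either $0$, when $\alpha_j+\ell-1$ is odd, or have denominator of $2$-adic valuation $<j+t$). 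Row-wise strict dominance immediately gives a unique minimal term, with the single mild condition $2^t\geq r$. Your route is a little longer and needs a slightly stronger bound on $L$, but it has the virtue of making explicit the parity structure that is implicit in the paper's choice of $\alpha_j$, and it parallels the $D\geq 3$ argument more closely.
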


\begin{proof}
 By Proposition $3.2$, it is enough to prove that $\tilde \Delta_{r,2}(\alf)\neq 0$. We have that
\begin{equation}\label{suk} 
\tilde{\Delta}_{r,2}(\alf) = \begin{vmatrix}
\frac{{\tilde B}_{\alpha_1} (\frac{1}{2})}{\alpha_1} & \frac{{\tilde B}_{\alpha_1+1} (\frac{1}{2})}{\alpha_1+1} & \cdots &  \frac{{\tilde B}_{\alpha_1+r-1} (\frac{1}{2})}{\alpha_1+r-1} \\
\frac{{\tilde B}_{\alpha_2} (\frac{1}{2})}{\alpha_2} & \frac{{\tilde B}_{\alpha_2+1} (\frac{1}{2})}{\alpha_2+1} & \cdots &  \frac{{\tilde B}_{\alpha_2+r-1} (\frac{1}{2})}{\alpha_2+r-1} \\
 \vdots & \vdots & \vdots & \vdots \\
\frac{{\tilde B}_{\alpha_r} (\frac{1}{2})}{\alpha_r} & \frac{{\tilde B}_{\alpha_r+1} (\frac{1}{2})}{\alpha_1+1} & \cdots &  \frac{{\tilde B}_{\alpha_r+r-1} (\frac{1}{2})}{\alpha_r+r-1} \\
\end{vmatrix}.
\end{equation}
We choose $\alpha_j:=2^{j+t}-j+1$, where $2^t\geq r$. From \eqref{int} and Lemma $2.1(1)$ it follows that
\begin{equation}\label{suku}
v_2({\tilde B}_{\alpha_j+j-1}(\frac{1}{2}))=0,\; v_2({\tilde B}_{\alpha_j+\ell-1}(\frac{1}{2}))\geq 0,\;(\forall)1\leq j,\ell\leq r,\;j\neq \ell.  
\end{equation}
On the other hand, 
\begin{equation}\label{sukuu}
j+t = v_2(\alpha_j+j-1) > v_2(\alpha_j+\ell-1),\;(\forall)1\leq j,\ell\leq r,\;j\neq \ell.
\end{equation}
From \eqref{suk}, \eqref{suku} and \eqref{sukuu}, it follows that
$$ v_2(\tilde{\Delta}_{r,2}(\alf)) = v_2\left(\prod_{j=1}^r \frac{{\tilde B}_{\alpha_{j}+j-1} (\frac{1}{2})}{\alpha_j+j-1} \right) = -rt-\binom{r}{2} < \infty, $$
hence $\tilde \Delta_{r,2}(\alf)\neq 0$, as required.
\end{proof}

In the following, we assume $D\geq 3$. Let $N:=\lfloor \frac{(D-1)r}{2} \rfloor$.
We also assume that 
$\alpha_t$ is odd for all $1\leq t\leq N$,  and $\alpha_t$ is even for all $N +1 \leq t \leq r(D-1)$.
Let $k:= \lfloor \frac{D-1}{2} \rfloor$ and $\bar k = \lceil \frac{D-1}{2} \rceil$. From \eqref{unu} and \eqref{berpe} it follows that
\begin{equation}\label{pistaq}
  \tilde B_{\alpha_t+j-1}(\frac{D-v}{D}) + \tilde B_{\alpha_t+j-1}(\frac{v}{D}) = \begin{cases} 0,& \alpha_t+j-1\text{ is odd} \\
 2\tilde B_{\alpha_t+j-1}(\frac{v}{D}),& \alpha_t+j-1 \text{ is even} \end{cases},
\end{equation}
for all $1\leq t\leq r(D-1),\;1\leq v\leq \bar k$ and $1\leq j\leq r$. We consider the determinants:
\begin{equation}\label{pistap}
\tilde{\Delta'}_{r,D}(\alf):= \begin{vmatrix}
\frac{\tilde B_{\alpha_1}(\frac{1}{D})}{\alpha_1} & \cdots & \frac{\tilde B_{\alpha_1}(\frac{k}{D})}{\alpha_1}  
& \frac{\tilde B_{\alpha_1+1}(\frac{1}{D})}{\alpha_1+1} & \cdots & \frac{\tilde B_{\alpha_1+1}(\frac{\bar k}{D})}{\alpha_1+1} & \cdots \\
\frac{\tilde B_{\alpha_2}(\frac{1}{D})}{\alpha_2} & \cdots & \frac{\tilde B_{\alpha_2}(\frac{k}{D})}{\alpha_2}
& \frac{\tilde B_{\alpha_2+1}(\frac{1}{D})}{\alpha_2+1} & \cdots & \frac{\tilde B_{\alpha_2+1}(\frac{\bar k}{D})}{\alpha_2+1}& \cdots \\
\vdots & \vdots & \vdots & \vdots & \vdots & \vdots & \vdots  \\
\frac{\tilde B_{\alpha_{N}}(\frac{1}{D})}{\alpha_{N}} & \cdots & \frac{\tilde B_{\alpha_{N}}(\frac{k}{D})}{\alpha_{N}} 
& \frac{\tilde B_{\alpha_{N+1}}(\frac{1}{D})}{\alpha_{N+1}} & \cdots & \frac{\tilde B_{\alpha_{N+1}}(\frac{\bar k}{D})}{\alpha_{N+1}} & \cdots 
\end{vmatrix} \end{equation}
\begin{equation}\label{pistai}
\tilde{\Delta''}_{r,D}(\alf):= \begin{vmatrix}
\frac{\tilde B_{\alpha_{N+1}}(\frac{1}{D})}{\alpha_{N+1}} & \cdots & \frac{\tilde B_{\alpha_{N+1}}(\frac{\bar k}{D})}{\alpha_{N+1}}  
& \frac{\tilde B_{\alpha_{N+1}+1}(\frac{1}{D})}{\alpha_{N+1}+1} & \cdots & \frac{\tilde B_{\alpha_{N+1}+1}(\frac{k}{D})}{\alpha_{N+1}+1} & \cdots \\
\frac{\tilde B_{\alpha_{N+2}}(\frac{1}{D})}{\alpha_{N+2}} & \cdots & \frac{\tilde B_{\alpha_{N+2}}(\frac{\bar k}{D})}{\alpha_{N+2}}
& \frac{\tilde B_{\alpha_{N+2}+1}(\frac{1}{D})}{\alpha_{N+2}+1} & \cdots & \frac{\tilde B_{\alpha_{N+2}+1}(\frac{k}{D})}{\alpha_{N+2}+1}& \cdots \\
\vdots & \vdots & \vdots & \vdots & \vdots & \vdots & \vdots  \\
\frac{\tilde B_{\alpha_{rD-r}}(\frac{1}{D})}{\alpha_{rD-r}} & \cdots & \frac{\tilde B_{\alpha_{rD-r}}(\frac{\bar k}{D})}{\alpha_{rD-r}} 
& \frac{\tilde B_{\alpha_{rD-r}+1}(\frac{1}{D})}{\alpha_{rD-r}+1} & \cdots & \frac{\tilde B_{\alpha_{rD-r}+1}(\frac{k}{D})}{\alpha_{rD-r}+1} & \cdots 
\end{vmatrix} \end{equation}

\begin{prop}
 With the above assumptions, it holds that
$$\tilde{\Delta}_{r,D}(\alf) = C\tilde{\Delta'}_{r,D}(\alf) \tilde{\Delta''}_{r,D}(\alf),$$
 where $C\neq 0$.  In particular, if $\tilde{\Delta'}_{r,D}(\alf)\neq 0$ and $\tilde{\Delta''}_{r,D}(\alf)\neq 0$ then $\tilde{\Delta}_{r,D}(\alf)\neq 0$.
\end{prop}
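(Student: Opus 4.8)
The plan is to split $\tilde{\Delta}_{r,D}(\alf)$ as a product of two smaller determinants by exploiting the reflection symmetry \eqref{pistaq} of the entries $\tilde B_n(v/D)$ under $v\mapsto D-v$. Write $c_{m,v}$ for the column of \eqref{pistaa} whose $t$-th entry is $\tilde B_{\alpha_t+m}(v/D)/(\alpha_t+m)$, where $0\le m\le r-1$ and $1\le v\le D-1$. For each fixed $m$ and each $v$ with $1\le v\le k$, I replace the pair of columns $(c_{m,v},c_{m,D-v})$ by the pair $(c_{m,v}+c_{m,D-v},\,c_{m,v}-c_{m,D-v})$; since the $2\times2$ matrix $\left(\begin{smallmatrix}1&1\\1&-1\end{smallmatrix}\right)$ has determinant $-2$, each such replacement multiplies the determinant by $-2$, so after all $rk$ of them the determinant becomes $(-2)^{rk}\,\tilde{\Delta}_{r,D}(\alf)$. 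When $D$ is even the single column $c_{m,\bar k}$ (with $\bar k=D/2$) is left untouched.

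I would then read off the support of each new column. By \eqref{pistaq}, the $t$-th entry of $c_{m,v}+c_{m,D-v}$ is $0$ if $\alpha_t+m$ is odd and equals $2\tilde B_{\alpha_t+m}(v/D)/(\alpha_t+m)$ if $\alpha_t+m$ is even, while the $t$-th entry of $c_{m,v}-c_{m,D-v}$ is $0$ if $\alpha_t+m$ is even and equals $2\tilde B_{\alpha_t+m}(v/D)/(\alpha_t+m)$ if $\alpha_t+m$ is odd; and by Lemma $2.1(1)$ the $t$-th entry of the untouched column $c_{m,\bar k}$ vanishes when $\alpha_t+m$ is odd. Since by hypothesis $\alpha_t$ is odd exactly for $1\le t\le N$ and even for $N+1\le t\le r(D-1)$, the parity of $\alpha_t+m$ is constant on each of the two row-groups and depends only on the parity of $m$. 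Hence every column produced above is supported either entirely on the top $N$ rows or entirely on the bottom $r(D-1)-N$ rows: the columns $c_{m,v}+c_{m,D-v}$ with $m$ odd, the columns $c_{m,v}-c_{m,D-v}$ with $m$ even, and the untouched columns $c_{m,\bar k}$ with $m$ odd are supported on the top rows, and all the others on the bottom rows.

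Permuting the columns so that the top-supported ones come first makes the matrix block-diagonal, with an $N\times N$ block $P$ in the top-left corner (the top-supported columns restricted to rows $1,\dots,N$) and an $(r(D-1)-N)\times(r(D-1)-N)$ block $Q$ in the bottom-right corner, all other entries being zero; this requires the elementary count that the number of top-supported columns is exactly $N$, which one checks using $k+\bar k=D-1$ together with a case analysis according to the parities of $r$ and $D$. Comparing the surviving entries --- each equal to $2\tilde B_{\alpha_t+m}(v/D)/(\alpha_t+m)$, except in the untouched columns where the factor $2$ is absent --- and the ranges of $v$ occurring in each block ($v=1,\dots,k$ in a difference block, $v=1,\dots,\bar k$ in a sum-plus-singleton block) with the definitions \eqref{pistap} and \eqref{pistai}, one identifies $\det P$ with $\tilde{\Delta'}_{r,D}(\alf)$ and $\det Q$ with $\tilde{\Delta''}_{r,D}(\alf)$, up to a sign and a nonnegative power of $2$. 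Collecting the sign of the column permutation and all the powers of $2$ into one nonzero constant $C$ yields $\tilde{\Delta}_{r,D}(\alf)=C\,\tilde{\Delta'}_{r,D}(\alf)\,\tilde{\Delta''}_{r,D}(\alf)$, and the ``in particular'' clause is then immediate. The only delicate part of the argument is this last bookkeeping step: one must verify that the reorganized columns of $P$ and of $Q$ match, block by block and in the right order, the patterns in \eqref{pistap} and \eqref{pistai}, and in particular that the dimension counts come out to $N$ and $r(D-1)-N$; the rest is routine manipulation of determinants.
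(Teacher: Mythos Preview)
Your proof is correct and follows the same approach as the paper: exploit the reflection identity \eqref{pistaq} via column operations to block-diagonalize $\tilde\Delta_{r,D}(\alf)$ into the two smaller determinants \eqref{pistap} and \eqref{pistai}. The paper's version is terser --- it records only the addition $c_{m,D-j}\leftarrow c_{m,D-j}+c_{m,j}$ and leaves the remaining manipulations to ``basic properties of determinants'' --- whereas you carry out the sum and the difference simultaneously and check the dimension count explicitly; one small remark is that your appeal to Lemma~2.1(1) for the vanishing of the middle column when $D$ is even is, as stated, only for $D=2$, but the needed fact follows at once from \eqref{pistaq} with $v=\bar k=D-v$.
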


\begin{proof}
In \eqref{pistaa}, we add the $(j+tr)$-th column over the $(D-j+tr)$-th column, where $1\leq j\leq k$ and $0\leq t\leq r-1$.
The conclusion follows from \eqref{pistaq}, \eqref{pistap} and \eqref{pistai} using the basic properties of determinants.
\end{proof}

\section{Proof of Theorem 1.2}

The case $D=1$ was proved in Proposition $3.1$. Also, the case $D=2$ was proved in Proposition $3.3$.
Assume that $D:=p>2$ is a prime number. Let $k:=\lfloor \frac{p-1}{2} \rfloor$. According to Proposition $3.4$,
it is enough to prove that $\tilde{\Delta'}_{r,p}(\alf)\neq 0$ and $\tilde{\Delta''}_{r,p}(\alf)\neq 0$.
Let 
\begin{equation}\label{40}
\log_p(r-1) < t_1 < t_2 < \cdots <t_r,
\end{equation}
 be a sequence of positive integers. We define
\begin{equation}\label{41}
\alpha_{j+(s-1)k}:=\begin{cases} 2jp^{t_s}-s+1,& s \text{ is even} \\ (2j-1)p^{t_s}-s+1,& s \text{ is odd} \end{cases},\;(\forall)1\leq s\leq r,\; 1\leq j\leq k.
\end{equation}
From \eqref{40} and \eqref{41} it follows that
\begin{eqnarray}
& v_p(\alpha_{j+(s-1)k}+s-1)=t_s,\;(\forall)1\leq s\leq r,\;1\leq j\leq k. \label{43} \\
& v_p(\alpha_{j+(s-1)k}+\ell)<t_1,\;(\forall)1\leq s\leq r,\;1\leq j\leq k \text{ and } 0\leq \ell\leq r-1 \text{ with }\ell\neq s-1.\label{44}
\end{eqnarray} 
On the other hand, from Lemma $2.1(2)$ it follows that 
\begin{equation}\label{45}
B_{\alpha_j}(\frac{v}{p})\equiv v^{\alpha_j}(\bmod\;p),\; (\forall)1\leq j\leq rp.
\end{equation}
From \eqref{43}, \eqref{44} and \eqref{45} it follows that
\begin{eqnarray}
& v_p(\frac{\widetilde B_{\alpha_{j+(s-1)k}+s-1}(\frac{v}{p})}{\alpha_{j+(s-1)k}+s-1} ) = -t_s,
\;(\forall)1\leq s\leq r,\;1\leq j,v \leq k, \label{46} \\
& v_p(\frac{\widetilde B_{\alpha_{j+(s-1)k}+\ell}(\frac{v}{p})}{\alpha_{j+(s-1)k}+\ell} ) > -t_1
\;(\forall)1\leq s\leq r,\;1\leq j,v\leq k \text{ and } 0\leq \ell\leq r-1 \text{ with }\ell\neq s-1.\label{47}
\end{eqnarray}
We consider the determinants
\begin{eqnarray}
& M_s : = \det\left(\widetilde B_{\alpha_{j+(s-1)k}+s-1}(\frac{v}{p})  \right)_{1\leq j,v\leq k},\; 1\leq s\leq r. \label{48}
\end{eqnarray}
From \eqref{45} it follows that 
\begin{eqnarray}
&  M_s \equiv \det\left(v^{2jp^{t_s}} \right)_{1\leq j,v\leq k} \equiv \det\left(v^{2j} \right)_{1\leq j,v\leq k} (\bmod\; p) \text{ for }s\text{ even}, \label{49} \\
&  M_s \equiv \det\left(v^{2(j-1)p^{t_s}} \right)_{1\leq j,v\leq k}\equiv \det\left(v^{2j-1} \right)_{1\leq j,v\leq k} (\bmod\; p) \text{ for }s\text{ odd}.\label{410}
\end{eqnarray}
On the other hand, using the Vandermonde formula, we have
\begin{eqnarray}
& \det\left(v^{2j} \right)_{1\leq j,v\leq k} = v^2 \prod_{1\leq i<j \leq k} (j-i)(j+i) \not \equiv 0 (\bmod p),\label{411}\\
& \det\left(v^{2j-1} \right)_{1\leq j,v\leq k} = v \prod_{1\leq i<j \leq k} (j-i)(j+i) \not \equiv 0 (\bmod p).\label{412}
\end{eqnarray}
From \eqref{49},\eqref{410},\eqref{411} and \eqref{412} it follows that 
\begin{equation}\label{413}
v_p(M_s)=0,\;(\forall)1\leq s\leq r, 
\end{equation}
hence, in particular $M_s\neq 0$. From \eqref{pistap}, \eqref{46}, \eqref{47}, \eqref{48} and \eqref{413} it follows that
$$v_p(\tilde{\Delta'}_{r,p}(\alf)) = -(t_1+t_2+\cdots+t_r)k,$$
hence, in particular, $\tilde{\Delta'}_{r,p}(\alf)\neq 0$. Similarly, one can prove that $\tilde{\Delta''}_{r,p}(\alf)\neq 0$.

\vspace{10pt}
\noindent
\textbf{Aknowledgment}: I would like to express my gratitude to Florin Nicolae for the valuable discussions regarding this paper.

{}

\vspace{2mm} \noindent {\footnotesize
\begin{minipage}[b]{15cm}
Mircea Cimpoea\c s, Simion Stoilow Institute of Mathematics, Research unit 5, P.O.Box 1-764,\\
Bucharest 014700, Romania, E-mail: mircea.cimpoeas@imar.ro
\end{minipage}}
\end{document}